\newtheorem{formula}{}%[section]
\newtheorem{proposition}[formula]{Proposition}
\newtheorem{corollary}[formula]{Corollary}
\newtheorem{thm}[formula]{Theorem}
\newtheorem{example}[formula]{Example}
\begin{document}
\title{Zeta Functions for tensor products of locally coprime integral adjacency algebras of association schemes}

\author{Allen Herman\thanks{The first authour acknowledges the support of an NSERC Discovery Grant.}, Mitsugu Hirasaka${}^{\dag}$, and Semin Oh\thanks{This research was supported by Basic Research Program through
the National Research Foundation of Korea(NRF) funded by the Ministry of Education,
Science and Technology (grant number NRF-2013R1A1A2012532).}}
\date{February 1, 2016 }%hi September 17, 2015}%
\maketitle

\begin{abstract}
\noindent
The zeta function of an integral lattice $\Lambda$ is the generating function $\zeta_{\Lambda}(s) = \sum\limits_{n=0}^{\infty} a_n n^{-s}$, whose coefficients count the number of left ideals of $\Lambda$ of index $n$.  We derive a formula for the zeta function of $\Lambda_1 \otimes \Lambda_2$, where $\Lambda_1$ and $\Lambda_2$ are $\mathbb{Z}$-orders contained in finite-dimensional semisimple $\mathbb{Q}$-algebras that satisfy a ``locally coprime'' condition.
We apply the formula obtained above to $\mathbb{Z}S \otimes \mathbb{Z}T$ and obtain
the zeta function of the adjacency algebra of the direct product of two finite association schemes $(X,S)$ and $(Y,T)$
in several cases where the $\mathbb{Z}$-orders $\mathbb{Z}S$ and $\mathbb{Z}T$ are locally coprime and
their zeta functions are known.

%The zeta function of an integral lattice $\Lambda$ is the generating function $\zeta_{\Lambda}(s) = \sum\limits_{n=0}^{\infty} a_n n^{-s}$, whose %coefficients count the number of left ideals of $\Lambda$ of index $n$.  We derive a formula for the zeta function of $\Lambda_1 \otimes \Lambda_2$, %where $\Lambda_1$ and $\Lambda_2$ are $\mathbb{Z}$-orders contained in finite-dimensional semisimple $\mathbb{Q}$-algebras that satisfy a ``locally %coprime'' condition.  We calculate $p$-adic zeta functions for $R$-lattices of rank $2$ over the ring of integers $R$ of any $p$-adic number field, %%hi $F$,%
%and apply these results to produce the zeta function of $\mathbb{Z}[S \times T]$, where $\mathbb{Z}S$ and $\mathbb{Z}T$ are
%locally coprime %hi
%adjacency algebras of association schemes $(X,S)$ and $(Y,T)$, respectively, and
%$\mathbb{Z}[S \times T]$ is that of the direct product $(X\times Y, S\times T)$ of $(X,S)$ and $(Y,T)$. %
%  We give explicit zeta functions for $\mathbb{Z}[S \times T]$ in several cases where explicit zeta functions for $\mathbb{Z}S$ and $\mathbb{Z}T$ are %known.
\end{abstract}

\smallskip
{\small \noindent {\it Key words :} Association schemes, adjacency algebras, zeta functions, integral representation theory.
\newline {\it AMS Classification:} Primary: 16G30; Secondary: 05E30, 20C15.}

\bigskip
\noindent {\bf 1. Introduction }

\medskip
Suppose $\Lambda$ is a $\mathbb{Z}$-order in a finite-dimensional $\mathbb{Q}$-algebra $A$, $V$ is a finite-dimensional $A$-module, and $L$ is a full $\Lambda$-lattice in $V$; i.e. a free $\mathbb{Z}$-submodule of $V$ with $\mathbb{Q}L = V$.  Solomon's zeta function is the function of a complex variable $s$ defined by
$$ \zeta_{\Lambda}(L,s) = \sum\limits_{N \subseteq L} (L:N)^{-s} $$
where $N$ runs over all full $\Lambda$-sublattices of $L$.  (The reader should be mindful that this was denoted $\zeta_L(s;\Lambda)$ in \cite{Sol77}.)  For each positive integer $n$, let $a_n$ be the number of full $\Lambda$-sublattices of $L$ with index $n$.  Then
$$ \zeta_{\Lambda}(L,s) = \sum\limits_{n=1}^{\infty} a_n n^{-s}. $$
We will not deal here with issues of convergence or continuation of these series, except to say it is well-known that they do converge uniformly on a region in the complex plane where the real part of $s$ is sufficiently large.   In \cite{Sol77}, Louis Solomon establishes several results for $\zeta_{\Lambda}(L,s)$ when $\Lambda$ is an order in a finite-dimensional semisimple $\mathbb{Q}$-algebra, the first of these being an Euler product formula
$$ \zeta_{\Lambda}(L,s) = \prod_p \zeta_{\Lambda_p}(L_p, s). $$
Here $p$ runs through the rational primes $p$, and $\Lambda_p$ and $L_p$ are the $p$-adic completions of $\Lambda$ and $L$, respectively, obtained by tensoring with the ring of $p$-adic integers $\mathbb{Z}_p$.  (Although Solomon uses this notation for $p$-localizations, this does not make a difference to the zeta function - see \cite[Lemma 9]{Sol77}.)  If the semisimple algebra $\mathbb{Q}_p\Lambda_p$ decomposes as $\oplus_{i=1}^h M_{r_i}(D_i)$, where the $p$-adic division algebras $D_i$ occurring in this decomposition have index $m_i$, the center of $D_i$ has ring of integers $R_i$, and each simple component has maximal order $\Gamma_{p,i}$, then we can compute the $p$-adic zeta function $\zeta_{\Gamma_p}(L_p,s)$.  To do so, first decompose the $\mathbb{Q}_p\Lambda$-module $\mathbb{Q}_p \otimes_{\mathbb{Q}} V$ into its homogeneous components.  This expresses it as a direct sum of modules $V_{p,i}$, each of which is isomorphic to a direct sum of $k_i$-copies of an irreducible $M_{r_i}(D_i)$-module, for $i=1, \dots, h$.  The lattice $L_p$ similarly decomposes as the direct sum of $L_{p,i}$'s, each a sublattice of $V_{p,i}$.  According to Hey's formula (see \cite[pg. 145]{BR80})
$$\begin{array}{rcl}
 \zeta_{\Gamma_p}(L_p,s) &=& \prod_{i=1}^h \zeta_{\Gamma_{p,i}}(L_{p,i},s) \\
                                          &=& \prod_{i=1}^h \prod_{j=0}^{k_i-1} \zeta_{R_i}(r_im_is-jm_i).
                                          \end{array}$$
Here $\zeta_{R_i}(s)$ is the usual Dedekind zeta function; i.e.
$$ \zeta_{R}(s) = \prod_{\mathcal{P}} (1 - \mathcal{N}(\mathcal{P})^{-s})^{-1}, $$
where $\mathcal{P}$ runs over all maximal ideals of the Dedekind domain $R$, and $\mathcal{N}(\mathcal{P})$ is the least power of the positive integral prime $p \in \mathcal{P}$ that generates the norm of the ideal $\mathcal{P}$.  For all but finitely many primes $p$, the $\mathbb{Z}_p$-order $\Lambda_p$ is a maximal order of $\mathbb{Q}_p\Lambda$, so the problem reduces to calculating  $\zeta_{\Lambda_p}(L_p, s)$ for these particular primes.

\medskip
In the particular case of the regular left $A$-module $V={}_AA$, we write $\zeta_{\Lambda}(s)$ for $\zeta_{\Lambda}(\Lambda,s)$.  In this case the left $\Lambda$-sublattices of $\Lambda$ of index $n$ are the $\mathbb{Z}$-free left ideals of $\Lambda$ with index $n$.  In \cite{Sol77}, Louis Solomon established several results for the zeta function of the integral group ring $\mathbb{Z}G$ treated as an order in $\mathbb{Q}G$.  Up to now explicit formulas expressing zeta functions of integral group rings in terms of products of Dedekind zeta functions of number fields have been found in a few instances.  The first was the case of the integral group ring $\mathbb{Z}C_p$ of a cyclic group of prime order $p$, whose zeta function is
(see \cite[Theorem~1]{Sol77})%hi
$$ \zeta_{\mathbb{Z}C_p}(s) = (1 - p^{-s} + p^{1-2s})\zeta_{\mathbb{Z}}(s)\zeta_{\mathbb{Z}[\varepsilon_p]}(s), $$
where $\varepsilon_p$ denotes a primitive root of unity of order $p$.
%hi

We will be interested in $\mathbb{Z}$-orders in the $\mathbb{Q}$-adjacency algebras of finite association schemes.
An association scheme is a purely combinatorial object generalizing the set of orbitals of a transitive permutation group.
 Let $X$ be a finite set and let $S$ be a partition of $X\times X$ which does not contain the empty set. We say that the 
pair $(X,S)$ is an \textit{association scheme} if
it satisfies the following conditions (see \cite{Ziesh}):
\begin{enumerate}
\item $\{(x,x)\mid x\in X\}$ is a member of $S$;
\item For each $s\in S$, $s^*=\{(x,y)\mid (y,x)\in s\}$ is a member of $S$;
\item For all $s,t,u\in S$, the size of $\{z\in X\mid (x,z)\in s, (z,y)\in t\}$ is constant whenever $(x,y)\in u$.
\end{enumerate}
Each relation $s\in S$ has a corresponding \textit{adjacency matrix} $\sigma_s$ whose rows and columns are indexed by the elements of $X$ as follows:
\[(\sigma_s)_{x,y}=\begin{cases}
1 & \mbox{if $(x,y)\in s$}\\
0 & \mbox{if $(x,y)\notin s$.}
\end{cases}
\]
%We call $\sigma_s$ the \textit{adjacecny matrix} of $s$.
The pair $(X,S)$ is an association scheme if and only if
\begin{enumerate}
\item The identity matrix is a member of $\{\sigma_s\mid s\in S\}$;
\item $\sum_{s \in S} \sigma_s = J$, the all $1$'s matrix whose rows and columns are indexed by $X$; 
\item for all $s \in S$, $\sigma_{s^*} = \sigma_s^{\top}$, where $\sigma_s^{\top}$ is the transpose of the matrix $\sigma_s$; and 
% We have $\{\sigma_s^\ast\mid s\in S\}=\{\sigma_s\mid s\in S\}$ where $\sigma_s^\ast$ is the transpose matrix of $\sigma_s$;
\item For all $s,t\in S$ the matrix product $\sigma_s\sigma_t$ is a nonnegative integer linear combination of $\{\sigma_s\mid s\in S\}$.
\end{enumerate}
For an association scheme $(X,S)$ we denote the set of $\mathbb{Z}$-linear combinations of $\{\sigma_s\mid s\in S\}$
by $\mathbb{Z}S$, and for a ring $R$ with unity we denote the tensor product $R\otimes_\mathbb{Z} \mathbb{Z}S$ by $RS$.
The above conditions imply that $RS$ is a subalgebra of the full matrix algebra of degree $|X|$ over $R$, which is called the \textit{adjacency algebra} of $(X,S)$ over $R$.   It is well-known that $RS$ is semisimple if $R$ is an extension field of $\mathbb{Q}$ (see \cite{hanakisemi}).   In particular, $\mathbb{Z}S$ is naturally a $\mathbb{Z}$-order in the semisimple algebra $\mathbb{Q}S$.  

Recently formulas for the zeta function for $\mathbb{Z}S$ were produced for some schemes of small rank \cite{HH14}.  We will consider the problem of expressing the zeta function of a tensor product of two integral adjacency algebras.  The approach of the present paper is based on ideas used by Hironaka \cite{Hiron81} to extend the calculation of $\zeta_{\mathbb{Z}C_p}(s)$ to the calculation of $\zeta_{\mathbb{Z}C_n}(s)$ for $n$ a square-free integer.   The methods of this paper will allow us to extend the cases where the zeta function is known to certain direct products of these known cases.

\bigskip
\noindent{\bf 2. Zeta functions for rank $2$ orders over rings of $p$-adic integers}

\medskip
Let $R$ be the ring of integers of a $p$-adic number field $F$. This means the field $F$ is a finite extension of the usual $p$-adic field $\mathbb{Q}_p$, and its ring of integers is a local PID with maximal ideal $\pi R$ for some $\pi \in R$.  The degree of the extension $F/\mathbb{Q}_p$ is $ef$, where $f$ is its residue degree and $e$ is its ramification index.  This means $[R:\pi R] = p^f$ and $\pi^e R = pR$ for the unique integer prime $p$ for which $p \in \pi R$.

Let $\Lambda$ be a $\mathbb{Z}$-lattice of rank $2$.  This means $\Lambda$ is an $R$-free subring of $\mathbb{Q}\Lambda$, and $\Lambda = R1 + Rx$ for some $x \in \Lambda \setminus R1$.  In this case the minimal polynomial of $x$ has degree $2$ in $\mathbb{Z}[x]$.  We will assume the (monic) minimal polynomial of $x$ is {\it reducible} in $\mathbb{Z}[x]$, so it factors as the product of (not necessarily distinct) linear factors.  By a shift of variable we may assume the minimal polynomial of $x$ has the form $x(x-n)$ for some integer $n$.  We wish to calculate the zeta function of $R\Lambda$ under these assumptions, for our $p$-adic ring of integers $R$.

For a nonzero integer $n$ we define $[n]$ to be the maximal positive integer $[n]$ for which $p^{[n]}$ divides $n$.

\begin{thm} \label{zetaRLmda}
Let $F$ be an extension of $\mathbb{Q}_p$ with residue degree $f$ and ramification index $e$.  Let $R$ be the ring of integers of $F$ and let $\pi \in R$ be chosen so that $\pi R$ is the maximal ideal of $R$.

Let $R\Lambda = R[x]/x(x-n)R[x]$.

\begin{enumerate}
\item If $n=0$, then $\zeta_{R\Lambda}(s) = (1-p^{f(1-2s)})^{-1} (1-p^{f(-s)})^{-1}$.
\item If $n \ne 0$, then
$$\zeta_{R\Lambda}(s) = \sum_{r_2=0}^{e[n]-1} p^{fr_2(1-2s)} (1-p^{f(-s)})^{-1} + p^{fe[n](1-2s)} (1 - p^{f(-s)})^{-2}.$$
\end{enumerate}
\end{thm}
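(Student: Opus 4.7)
The plan is to parametrize the full $R$-sublattices of $R\Lambda$ in Hermite normal form, then impose closure under multiplication by $x$ to single out the left ideals, and finally sum the resulting indices.

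Since $R\Lambda = R \oplus Rx$ is free of rank $2$ over $R$, every full $R$-sublattice $N$ has a unique upper-triangular Hermite normal form $N = R(\pi^b + cx) + R\pi^a x$ with $a, b \geq 0$ and $c$ representing a class in $R/\pi^a R$, and a direct count gives the index $(R\Lambda : N) = p^{f(a+b)}$. Because $R\Lambda$ is commutative, $N$ is an ideal iff $xN \subseteq N$. Using $x^2 = nx$ one computes $x(\pi^b + cx) = (\pi^b + cn)x$ and $x \cdot \pi^a x \in \pi^a Rx \subseteq N$, so the ideal condition reduces to the single congruence $\pi^b + cn \equiv 0 \pmod{\pi^a}$.

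For case (i), when $n = 0$, the congruence becomes $b \geq a$ with $c$ unrestricted, and $\zeta_{R\Lambda}(s) = \sum_{0 \leq a \leq b} p^{fa} p^{-f(a+b)s}$ evaluates via two geometric series (first on $b \geq a$, then on $a \geq 0$) to the stated form.

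For case (ii), when $n \neq 0$, the $\pi$-adic valuation of $n$ is $e[n]$, so the linear congruence $cn \equiv -\pi^b \pmod{\pi^a}$ splits into two regimes. When $a \leq e[n]$, one has $n \equiv 0 \pmod{\pi^a}$, so a solution exists iff $b \geq a$ and then all $p^{fa}$ classes of $c$ qualify; when $a > e[n]$, multiplication by $n$ on $R/\pi^a R$ has image $\pi^{e[n]} R/\pi^a R$ and kernel of size $p^{fe[n]}$, giving $p^{fe[n]}$ solutions exactly when $b \geq e[n]$. Splitting $\zeta_{R\Lambda}(s)$ along these two regimes and evaluating each as a double geometric series, then absorbing the $a = e[n]$ boundary term of the first sum into the second, yields the stated closed form.

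The main obstacle is the casework in (ii) for counting congruence solutions and correctly handling the transition at $a = e[n]$, where the tail of ideals with $a > e[n]$ produces the doubled factor $(1-p^{-fs})^{-2}$; the remaining work is routine geometric-series bookkeeping, and as a sanity check the formula in (ii) reduces to Hey's $(1-p^{-fs})^{-2}$ for the maximal order $R \times R$ when $[n] = 0$.
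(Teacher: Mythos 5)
Your proposal is correct and follows essentially the same route as the paper's proof: parametrize the full $R$-sublattices by a triangular basis $\{\pi^{r_1}+ax,\ \pi^{r_2}x\}$, reduce the ideal condition to the single congruence $\pi^{r_1}+an\equiv 0 \pmod{\pi^{r_2}}$, count solutions in the two valuation regimes, and sum the geometric series. Your placement of the boundary case $r_2=e[n]$ in the first regime (and then absorbing it into the second term) is a harmless cosmetic difference, and your solution count $p^{fe[n]}$ in the large-$r_2$ regime is the correct one (the paper's intermediate expression $p^{e[n]}$ is a typo that its final formula silently corrects).
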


\begin{proof}
Note that $\{1,x\}$ is an $R$-basis of $R\Lambda$.  Let $I$ be an $R$-free ideal of $R\Lambda$ with finite index; i.e. $I$ is a free $R$-submodule that is an ideal of $R\Lambda$.  Since $I$ is a free $R$-submodule, it has a generating set of the form $\{\pi^{r_1}+ax,\pi^{r_2}x\}$, where $a \in R$ is given up to addition modulo $\pi^{r_2}R$.  For such a submodule, we have $[R\Lambda : I] = [R: \pi R]^{(r_1+r_2)} = p^{f(r_1+r_2)}$.

Since $I$ is an ideal of $R\Lambda$, it must be closed under multiplication by $x$, and this imposes extra conditions on our generating set.  In particular, using the fact that $x^2=nx$ in $R\Lambda$, we must have that
$$\begin{array}{rcccl}

x(\pi^{r_1}+ax) &=& (\pi^{r_1} + an)x &=& c_1(\pi^{r_1}+ax)+c_2(\pi^{r_2}x), \exists c_1,c_2 \in R, \mbox{ and } \\

x(\pi^{r_2}x) &=& (\pi^{r_2}n)x &=& d_1(\pi^{r_1}+ax)+d_2(\pi^{r_2}x), \exists d_1,d_2 \in R.

\end{array}$$
The second equation has a unique solution: $d_1=0$ and $d_2=n$. In the first equation, $c_1=0$, so it reduces to $\pi^{r_1} +an = c_2\pi^{r_2}$.  The number of distinct ideals for a fixed pair of nonnegative integers $r_1$ and $r_2$ is
$$ N(r_1,r_2) = |\{a \in R/ \pi^{r_2}R : \pi^{r_1} + an \in \pi^{r_2} R\}|. $$
When $n=0$ (which is allowed), the condition reduces to $r_1 \ge r_2$ and we have $N(r_1,r_2)=p^{fr_2}$.  On the other hand, when $n \ne 0$, we have that $nR = p^{[n]}R = \pi^{e[n]} R$.  When $0 \le r_2 < e[n]$, then we will have $r_2 \le r_1$, so every $a \in R$ satisfies $\pi^{r_1}+an \in \pi^{r_2} R$.  This means that $N(r_1,r_2) = | R : \pi^{r_2} R| = p^{fr_2}$ in this case.   However, when $e[n]\le r_2$  we will have $\pi^{r_2}/n \in \pi^{r_2}R$.  The condition then reduces to $a \in \pi^{r_1}/n + (\pi^{r_2}/n) R$.  The number of equivalence classes in $R/\pi^{r_2}R$ satisfying this condition is $p^{fr_2}/p^{fr_2 -e[n]} = p^{e[n]}$.

We can now compute the zeta function of $R\Lambda$ directly in each case.  If $n=0$, then
$$\begin{array}{rcl}
\zeta_{R\Lambda}(s) &=& \sum_{r_2 \ge 0} \sum_{r_1 \ge r_2} p^{fr_2} p^{f(r_1+r_2)(-s)} \\
&=& \sum_{r_2\ge 0} p^{fr_2(1-s)} \sum_{r_1\ge r_2} p^{fr_1(-s)} \\
&=& \sum_{r_2 \ge 0} p^{fr_2(1-s)} p^{fr_2(-s)} (1-p^{f(-s)})^{-1} \\
&=& (1-p^{f(1-2s)})^{-1} (1-p^{f(-s)})^{-1}.
\end{array}$$
When $n \ne 0$, we get
$$\begin{array}{rcl}
\zeta_{R\Lambda}(s) &=& \sum_{r_2} \sum_{r_1} N(r_1,r_2) p^{f(r_1+r_2)(-s)} \\
                    &=& \sum_{0 \le r_2 < e[n]} \sum_{r_1 \ge r_2} p^{fr_2} p^{f(r_1+r_2)(-s)} \\
		& & + \sum_{r_2 \ge e[n]} \sum_{r_1 \ge e[n]} p^{e[n]} p^{f(r_1+r_2)(-s)} \\
 		&=& \sum_{r_2=0}^{e[n]-1} p^{fr_2(1-s)} \sum_{r_1 \ge r_2} p^{fr_1(-s)} \\
		& & + p^{e[n]} \sum_{r_2 \ge e[n]} p^{fr_2(-s)} \sum_{r_1 \ge e[n]} p^{fr_1(-s)} \\
		&=& \sum_{r_2=0}^{e[n]-1} p^{fr_2(1-2s)} (1-p^{f(-s)})^{-1} + p^{fe[n](1-2s)} (1 - p^{f(-s)})^{-2}.
\end{array}$$
\end{proof}

We remark that the above approach is valid when $n=0$ because it does not require that
$\Lambda$ be contained in a maximal order of $\mathbb{Q}_p\Lambda$.   If $R$ is the ring of integers of an algebraic number field $K$, then in order to apply this approach we need to establish an Euler product formula in the absence of the semisimplicity condition.  Let $spec(R)$ be the set of nonzero prime ideals of $R$.  Suppose $\Lambda$ is an $R$-order in a nonsemisimple $K$-algebra $A$, and $L$ is a full $\Lambda$-lattice in an $A$-module $V$.  By \cite[Theorem I.8.8]{Rog70} if $M$ is a full $\Lambda$-lattice with finite index in $L$, then
$$ L/M \simeq \bigoplus_{\mathcal{P} \in spec(R)} L_{\mathcal{P}}/M_{\mathcal{P}}, $$
and $L_{\mathcal{P}}/M_{\mathcal{P}}$ can be nontrivial for only finitely many primes $\mathcal{P}$ in $spec(R)$.
It follows that $[L:M] = \prod_{\mathcal{P}} [L_{\mathcal{P}}:M_{\mathcal{P}}]$, and this product has only finitely many factors larger than $1$.  Furthermore, for any selection of full $\Lambda_{\mathcal{P}}$-lattices $\{M_{\mathcal{P}} \}$ with all but finitely many equal to $L_{\mathcal{P}}$, it follows from \cite[Lemma I.8.8]{Rog70} that there exists a full $\Lambda$-lattice $M$ with the selected $\mathcal{P}$-localization at every prime $\mathcal{P}$.  Therefore, keeping in mind these products are finite and the indices are all powers of individual rational primes $p$, we get
$$\begin{array}{rcl}
\zeta_{\Lambda}(L,s) &=& \sum_{[L:M]<\infty} [L:M]^{-s} \\
                     &=& \sum_M \big( \prod_{\mathcal{P}} [L_{\mathcal{P}}:M_{\mathcal{P}}] \big)^{-s} \\
                     &=& \prod_{\mathcal{P}} \big( \sum_{M_{\mathcal{P}}} [L_{\mathcal{P}}:M_{\mathcal{P}}]^{-s} \big) \\
		     &=& \prod_{\mathcal{P}} \zeta_{\Lambda_{\mathcal{P}}}(L_{\mathcal{P}},s).
\end{array}$$
Therefore, the factors in this Euler product can be computed using the above Lemma even in the case where $\mathbb{Q}_p \Lambda$ is not  semisimple.

\medskip
As a corollary we can generalize Hanaki and Hirasaka's calculation of the zeta function of an integral adjacency algebra of an association scheme of rank $2$ to larger coefficient rings.

\begin{corollary} \label{zetaRT}
Let $(Y,T)$ be an association scheme of rank $2$ and order $n$.  Let $p$ be a prime divisor of $n$.  Suppose $F$ is an extension of $\mathbb{Q}_p$ with residue degree $f$ and ramification index $e$, and let $R$ be its ring of integers.  Then
$$ \zeta_{RT}(s) = \sum_{r_2=0}^{e[n]-1} p^{fr_2(1-2s)} (1-p^{f(-s)})^{-1} + p^{fe[n](1-2s)} (1 - p^{f(-s)})^{-2}. $$
\end{corollary}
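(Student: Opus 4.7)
The plan is to reduce the statement to Theorem~\ref{zetaRLmda}(ii) by identifying the integral adjacency algebra of a rank $2$ scheme as an algebra of the form $\mathbb{Z}[x]/x(x-n)\mathbb{Z}[x]$ after a suitable shift of generator. Once this identification is in hand, the formula for $\zeta_{RT}(s)$ is literally the formula from Theorem~\ref{zetaRLmda}(ii) applied with this value of $n$.

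First I would analyze the structure of a rank $2$ association scheme $(Y,T)$ with $|Y|=n$. Here $T=\{1_Y,t\}$, where $1_Y$ is the diagonal relation and $t$ is its complement in $Y\times Y$. Writing $\sigma=\sigma_t$ for the adjacency matrix of $t$, the scheme axioms imply $\sigma+I=J$, where $J$ is the $n\times n$ all-ones matrix. Using $J^2=nJ$, a direct computation yields
$$\sigma^2=(J-I)^2=J^2-2J+I=(n-2)\sigma+(n-1)I,$$
so the minimal polynomial of $\sigma$ over $\mathbb{Z}$ is $(t+1)(t-(n-1))$.

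Next I would perform the shift of variable $y=\sigma+I$. Then $\mathbb{Z}T=\mathbb{Z}[\sigma]=\mathbb{Z}[y]$, and the minimal polynomial of $y$ is $y(y-n)$. This exhibits a ring isomorphism $\mathbb{Z}T\cong\mathbb{Z}[x]/x(x-n)\mathbb{Z}[x]$; tensoring with $R$ over $\mathbb{Z}$ gives $RT\cong R[x]/x(x-n)R[x]$, which is exactly the algebra treated by Theorem~\ref{zetaRLmda}.

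Since $n=|Y|>0$, we are in the case $n\neq 0$ of the theorem, so part (ii) applies verbatim and produces the claimed expression for $\zeta_{RT}(s)$. There is no real obstacle here beyond making the shift of generator explicit; the only thing to double-check is that the integer $n$ appearing in the theorem genuinely coincides with the order of the scheme and that $[n]\ge 1$ exactly when $p\mid n$, which is immediate from the definition of $[n]$.
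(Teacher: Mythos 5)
Your proof is correct and takes essentially the same route as the paper: identify $RT$ with $R[x]/x(x-n)R[x]$ by shifting the generator (the paper phrases this as the change of variable $x \mapsto x-1$ applied to the minimal polynomial $(x-(n-1))(x+1)$ of the complete-graph adjacency matrix, which is exactly your substitution $y=\sigma+I=J$), and then apply Theorem~\ref{zetaRLmda}(ii). The only difference is that you spell out the minimal-polynomial computation that the paper leaves implicit.
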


\begin{proof}
Under these assumptions $T = \{\sigma_0,\sigma_1\}$, where $\sigma_1$ is the adjacency matrix of the ordinary complete graph on $n$ vertices $K_n$, whose minimal polynomial is $\mu(x) = (x - (n -1))(x+1)$.  It is easy to see that $RT = R[\sigma_1]$.  By the change of variable $x \mapsto x-1$ we have $RT \simeq R[x]/x(x-n)R[x]$.  So Theorem \ref{zetaRLmda} can be applied to calculate the zeta function of $RT$ directly.
\end{proof}

\bigskip
\noindent {\bf 3. Zeta functions for tensor products of locally coprime orders }

\medskip
In this section we consider the calculation of the zeta function of the tensor product of two $\mathbb{Z}$-orders $\Lambda_1$ and $\Lambda_2$ under certain conditions.  Since $\Lambda_1$ and $\Lambda_2$ contain $\mathbb{Z}$-bases $\mathbf{b}_1$ and $\mathbf{b}_2$, we have that % of $\mathbb{Q}\Lambda_1$ and $\mathbb{Q}\Lambda_2$, respectively, we have that
$\mathbb{Z}\Lambda_1 = \mathbb{Z}\mathbf{b}_1$ and $\mathbb{Z}\Lambda_2 = \mathbb{Z}\mathbf{b}_2$, where this notation denotes the integer span of the given set.  This means that for any extension $\mathcal{R}$ of $\mathbb{Z}$, we will have
$$\begin{array}{rcl}
\mathcal{R} \otimes_{\mathbb{Z}} (\Lambda_1 \otimes_{\mathbb{Z}} \Lambda_2) &=& \mathcal{R} \otimes_{\mathbb{Z}} \mathbb{Z}[\mathbf{b}_1 \times \mathbf{b_2}] \\
&=& \mathcal{R}[\mathbf{b}_1 \times \mathbf{b}_2] \\
&=& \mathcal{R}\mathbf{b}_1 \otimes_{\mathcal{R}} \mathcal{R}\mathbf{b}_2 \\
&=& \mathcal{R}\Lambda_1 \otimes_{\mathcal{R}} \mathcal{R}\Lambda_2.
\end{array}$$

Now for the conditions on the pair of orders $\Lambda_1$ and $\Lambda_2$, first we require that $\mathbb{Q}\Lambda_1$ and $\mathbb{Q}\Lambda_2$ are finite-dimensional commutative semisimple algebras.  This implies that $\Lambda_1$ and $\Lambda_2$ are contained in maximal orders $\Gamma_1$ and $\Gamma_2$, respectively, and we can write
$$ \Gamma_i \simeq \oplus_{j=1}^{h_i} R_{ij}, i = 1,2,$$
where $R_{ij}$ is the ring of integers in the algebraic number field $F_{ij}$ that appears as the center of the $j$-th simple component of $\mathbb{Q}\Lambda_i$.  For a given rational prime $p$, we will assume $F_{ij}$ has splitting degree $g_{ij}$, that $\mathcal{P}_{ij1}, \dots, \mathcal{P}_{ijg_{ij}}$ are the distinct primes of $F_{ij}$ lying above $p$, and that $R_{ijk}$ is the completion $(R_{ij})_{\mathcal{P}_{ijk}}$ for $k=1,\dots,g_{ij}$.

Second, we will require that the two orders $\Lambda_1$ and $\Lambda_2$ %hi
are {\it locally coprime}, which means that for all primes $p$, either $\mathbb{Z}_p\Lambda_1$ is a maximal order in $\mathbb{Q}_p\Lambda_1$ or $\mathbb{Z}_p\Lambda_2$ is a maximal order in $\mathbb{Q}_p\Lambda_2$.  Under these two conditions we will obtain an expression for $\zeta_{\mathbb{Z}_p}[\Lambda_1 \otimes \Lambda_2]$ for all primes $p$.

The zeta function for $\Lambda_1 \otimes_{\mathbb{Z}} \Lambda_2$ will be expressed using the Euler product formula: if $\Lambda$ is an order in a semisimple algebra and $\Gamma_p$ is a maximal order of $\mathbb{Q}_p\Lambda$ containing $\mathbb{Z}_p\Lambda$ for all rational primes $p$, then
$$ \zeta_{\Lambda}(s) = \zeta_{\Gamma}(s) \prod_{p \in B} \frac{\zeta_{\mathbb{Z}_p\Lambda}(s)}{\zeta_{\Gamma_p}(s)}, $$
where $B$ is the set of primes $p$ for which $\mathbb{Z}_p\Lambda$ is not a maximal order of $\mathbb{Q}_p\Lambda$.

\begin{thm} \label{PCoprime}
Let $\Lambda_1$ and $\Lambda_2$ be orders in commutative semisimple $\mathbb{Q}$-algebras $\mathbb{Q}\Lambda_1$ and $\mathbb{Q}\Lambda_2$, respectively.  Suppose that $\Lambda_1$ and $\Lambda_2$ are locally coprime.   Let $\Gamma_1 = \oplus_{j=1}^{h_1} R_{1j}$ and $\Gamma_2 = \oplus_{j=1}^{h_2} R_{2j}$ be maximal orders containing $\Lambda_1$ and $\Lambda_2$.  For each rational prime $p$, we have $g_{ij}, R_{ijk}, f_{ijk},$ and $e_{ijk}$ as defined above, so in particular $R_{ijk}$ is a direct summand of the maximal order $\mathbb{Q}_p\Gamma_i$ of $\mathbb{Q}_p\Lambda_i$ for all primes $p$ and $i=1,2$.  Let $B_i$ be the set of primes $p$ for which $\mathbb{Q}_p\Lambda_i \neq \mathbb{Q}_p\Gamma_i$ for $i=1,2$.  Then
$$ \zeta_{\mathbb{Z}[\Lambda_1 \otimes \Lambda_2]}(s) = \zeta_{\Gamma_1 \otimes \Gamma_2}(s) \delta_{B_1}(s) \delta_{B_2}(s), $$
where
$$ \delta_{B_1}(s) = \prod_{p \in B_1} \prod_{j=1}^{h_2} \prod_{k=1}^{g_{2j}} \frac{\zeta_{R_{2jk}\Lambda_1}(s)}{\zeta_{R_{2jk}\Gamma_1}(s)}, $$
and
$$ \delta_{B_2}(s) = \prod_{p \in B_2} \prod_{j=1}^{h_1} \prod_{k=1}^{g_{1j}} \frac{\zeta_{R_{1jk}\Lambda_2}(s)}{\zeta_{R_{1jk}\Gamma_2}(s)}. $$
\end{thm}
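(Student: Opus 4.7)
The plan is to apply the Euler product formula prime-by-prime and exploit the locally coprime hypothesis to factor each local zeta function. Since $\mathbb{Q}\Lambda_1$ and $\mathbb{Q}\Lambda_2$ are commutative semisimple $\mathbb{Q}$-algebras and $\mathbb{Q}$ is perfect, $\mathbb{Q}[\Lambda_1\otimes\Lambda_2] = \mathbb{Q}\Lambda_1\otimes_{\mathbb{Q}}\mathbb{Q}\Lambda_2$ is again semisimple, so Solomon's Euler product applies to both $\Lambda_1\otimes\Lambda_2$ and $\Gamma_1\otimes\Gamma_2$. The key identity at each prime is
$$\mathbb{Z}_p[\Lambda_1\otimes\Lambda_2] = \mathbb{Z}_p\Lambda_1\otimes_{\mathbb{Z}_p}\mathbb{Z}_p\Lambda_2,$$
recorded in the opening paragraph of this section.

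First I would partition the rational primes into three disjoint classes using that locally coprime forces $B_1\cap B_2=\emptyset$: primes in $B_1$, primes in $B_2$, and primes in neither. For $p$ outside $B_1\cup B_2$ one has $\mathbb{Z}_p\Lambda_i=\mathbb{Z}_p\Gamma_i$ for $i=1,2$, so the $p$-local factors of $\zeta_{\mathbb{Z}[\Lambda_1\otimes\Lambda_2]}(s)$ and $\zeta_{\Gamma_1\otimes\Gamma_2}(s)$ coincide and contribute trivially to the ratio.

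Next, fix $p\in B_1$, so by local coprimeness $\mathbb{Z}_p\Lambda_2=\mathbb{Z}_p\Gamma_2=\bigoplus_{j,k}R_{2jk}$. Tensoring and distributing gives
$$\mathbb{Z}_p[\Lambda_1\otimes\Lambda_2] = \mathbb{Z}_p\Lambda_1\otimes_{\mathbb{Z}_p}\Bigl(\bigoplus_{j,k}R_{2jk}\Bigr) = \bigoplus_{j=1}^{h_2}\bigoplus_{k=1}^{g_{2j}}R_{2jk}\Lambda_1.$$
The orthogonal central idempotents that witness this direct sum decomposition split every full left lattice into a direct sum of lattices, one per summand, and the $\mathbb{Z}_p$-index is multiplicative under that decomposition; therefore
$$\zeta_{\mathbb{Z}_p[\Lambda_1\otimes\Lambda_2]}(s) = \prod_{j,k}\zeta_{R_{2jk}\Lambda_1}(s),$$
and analogously $\zeta_{\mathbb{Z}_p[\Gamma_1\otimes\Gamma_2]}(s)=\prod_{j,k}\zeta_{R_{2jk}\Gamma_1}(s)$. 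Dividing yields precisely the $p$-local factor of $\delta_{B_1}(s)$, and the contribution to $\delta_{B_2}(s)$ is trivial. The case $p\in B_2$ is handled by the symmetric computation and produces the $p$-local factor of $\delta_{B_2}(s)$.

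Assembling the three cases inside the Euler product gives
$$\zeta_{\mathbb{Z}[\Lambda_1\otimes\Lambda_2]}(s) = \zeta_{\Gamma_1\otimes\Gamma_2}(s)\,\delta_{B_1}(s)\,\delta_{B_2}(s),$$
as claimed. The step that carries the real content, and which I expect to be the main obstacle to state cleanly, is the passage from $\mathbb{Z}_p\Lambda_1\otimes_{\mathbb{Z}_p}\mathbb{Z}_p\Gamma_2$ to $\bigoplus_{j,k}R_{2jk}\Lambda_1$ together with the multiplicativity of the local zeta function across this decomposition; this hinges on lifting the orthogonal central idempotents of $\mathbb{Z}_p\Gamma_2$ to the tensor product and on the compatibility of the $\mathbb{Z}_p$-index with the indices implicit in each $\zeta_{R_{2jk}\Lambda_1}(s)$. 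Everything else is bookkeeping within the Euler product and the absorption of the ``good prime'' factors into $\zeta_{\Gamma_1\otimes\Gamma_2}(s)$.
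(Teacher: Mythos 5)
Your proposal is correct and follows essentially the same route as the paper's own proof: invoke the Euler product, use local coprimeness to replace $\mathbb{Z}_p\Lambda_2$ by $\bigoplus_{j,k}R_{2jk}$ for $p\in B_1$, distribute the tensor product to get $\bigoplus_{j,k}R_{2jk}\Lambda_1$, and conclude that the local zeta function is the corresponding product. The only difference is presentational: you spell out the three-way partition of primes and the multiplicativity of the local zeta function across the central idempotent decomposition, which the paper leaves implicit in the phrase ``it follows that.''
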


\begin{proof}
By the Euler product formula, it suffices to verify the formula for $\delta_{B_1}(s)$.  If $p \in B_1$, then $\mathbb{Z}_p\Lambda_2$ is a maximal order in $\mathbb{Q}_p\Lambda_2$, so our notation says that
$$\mathbb{Z}_p\Lambda_2 \simeq \oplus_{j=1}^{h_2} \oplus_{k=1}^{g_{2j}} R_{2jk}.$$
Now, we have that
$$\begin{array}{rcl}
\mathbb{Z}_p \otimes [\Lambda_1 \otimes \Lambda_2] &\simeq& \mathbb{Z}_p \Lambda_1 \otimes_{\mathbb{Z}_p} \mathbb{Z}_p\Lambda_2 \\
&\simeq& \mathbb{Z}_p\Lambda_1 \otimes_{\mathbb{Z}_p} ( \oplus_{j=1}^{h_2} \oplus_{k=1}^{g_{2j}} R_{2jk} ) \\
&=& \oplus_{j=1}^{h_2} \oplus_{k=1}^{g_{2j}} R_{2jk} \Lambda_1.
\end{array}$$
It follows that
$$\zeta_{\mathbb{Z}_p[\Lambda_1 \otimes \Lambda_2]}(s) = \prod_{j=1}^{h_2} \prod_{k=1}^{g_{2j}} \zeta_{R_{2jk}\Lambda_1}(s).$$
The formula for $\delta_{B_1}(s)$ is now a consequence of the Euler product formula.
\end{proof}

It is straightforward to extend this idea to the tensor product of finitely many orders that are locally coprime, where this is taken to mean that for any rational prime $p$, at most one of the $p$-adic completions of the orders is not a maximal order in its overlying $\mathbb{Q}_p$-algebra.

\medskip
The {\it direct product} of $(X,S)$ and $(Y,T)$ is the scheme $(X \times Y, S \times T)$ whose adjacency matrices are the pairwise tensor products of the adjacency matrices of $S$ with those of $T$. (see \cite[\S 7]{Ziesh}). This produces a $\mathbb{Q}$-algebra with a predictable Wedderburn decomposition since
$$ \mathbb{Q}[S \times T] \simeq \mathbb{Q}S \otimes_{\mathbb{Q}} \mathbb{Q}T. $$
Since $\mathbb{Z}S$ and $\mathbb{Z}T$ are orders with
$$ \mathbb{Z}[S \times T] \simeq \mathbb{Z}S \otimes_{\mathbb{Z}} \mathbb{Z}T, $$
it is clear that the approach of section 3 can be directly applied in cases where we have two commutative association schemes whose integral scheme rings are locally coprime.  We will give several examples where this does occur in the next section.

\bigskip
\noindent {\bf 4. Examples of explicit zeta functions }

\medskip
In this section we apply the results and methods of section 2 and 3 to produce explicit zeta functions in several new instances.  Our first application combines the notation of section 3 with Corollary \ref{zetaRT}.

\begin{proposition}
Let $(Y,T)$ be an association scheme of rank $2$ and order $n$.
Let $R$ be the ring of integers in an algebraic number field $F$.  For each prime $p$, let $g_p$ be the splitting degree of $F$ at $p$, let $R_{p1}, \dots, R_{pg_p}$ be the distinct $p$-adic completions of $R$ at the primes of $F$ lying over $p$, and suppose the quotient field of each $R_{pj}$ has residue degree $f_{pj}$ and ramification index $e_{pj}$.   Then
$$ \zeta_{RT}(s) = \zeta_{R}(s)^2 \prod_{p|n} \prod_{j=1}^{g_p} \dfrac{\zeta_{R_{pj}T}(s)}{\zeta_{R_{pj}}(s)^2}, $$
where $\zeta_{R_{pj}T}(s)$ is calculated for the $p$-adic ring of integers $R_{pj}$ using the formula of Corollary \ref{zetaRT} in terms of $n$, $e_{pj}$, and $f_{pj}$.
\end{proposition}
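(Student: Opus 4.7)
The plan is to establish Solomon's Euler product for $\zeta_{RT}(s)$, handle the local factors at primes $p\nmid n$ separately using a maximal-order argument, and then reorganize the product using the Euler product of the Dedekind zeta function $\zeta_R(s)$ to isolate the correction factors at primes $p\mid n$.

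First, $\mathbb{Q}\otimes_{\mathbb{Z}}RT = F\otimes_{\mathbb{Q}}\mathbb{Q}T\cong F\times F$ is commutative semisimple, so Solomon's Euler product applies to the $\mathbb{Z}$-order $RT$. Using the decomposition $\mathbb{Z}_p\otimes_{\mathbb{Z}}R = \bigoplus_{j=1}^{g_p}R_{pj}$ together with the identification $\mathcal{R}\otimes_{\mathbb{Z}}\mathbb{Z}T = \mathcal{R}T$ from Section~3, one obtains
\[
\mathbb{Z}_p\otimes_{\mathbb{Z}}RT \;=\; \bigoplus_{j=1}^{g_p}R_{pj}T.
\]
Since left ideals of a direct sum of rings decompose componentwise and the $\mathbb{Z}$-index is multiplicative, the zeta function of such a direct sum factors as a product, yielding
\[
\zeta_{RT}(s) \;=\; \prod_{p}\prod_{j=1}^{g_p}\zeta_{R_{pj}T}(s).
\]

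Second, for each rational prime $p$ with $p\nmid n$, the element $J=\sum_{t\in T}\sigma_t\in\mathbb{Z}T$ satisfies $J/n\in\mathbb{Z}_pT$, so $e_0=J/n$ and $e_1=1-J/n$ are orthogonal idempotents summing to $1$ inside $\mathbb{Z}_pT$. Hence $\mathbb{Z}_pT\cong\mathbb{Z}_p\times\mathbb{Z}_p$ is maximal in $\mathbb{Q}_pT$, and extending scalars gives $R_{pj}T\cong R_{pj}\times R_{pj}$. Therefore
\[
\zeta_{R_{pj}T}(s) \;=\; \zeta_{R_{pj}}(s)^{2} \;=\; (1-p^{-f_{pj}s})^{-2},
\]
which is the squared local factor of the Dedekind zeta function $\zeta_R(s)=\prod_p\prod_{j=1}^{g_p}(1-p^{-f_{pj}s})^{-1}$ at the prime of $R$ corresponding to $R_{pj}$.

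Finally, splitting the global Euler product according to whether $p\mid n$, and inserting the trivial factor $\prod_{p\mid n}\prod_{j=1}^{g_p}\zeta_{R_{pj}}(s)^{2}/\zeta_{R_{pj}}(s)^{2}$, one assembles
\[
\zeta_{RT}(s) \;=\; \Bigl(\prod_{p}\prod_{j=1}^{g_p}\zeta_{R_{pj}}(s)^{2}\Bigr)\prod_{p\mid n}\prod_{j=1}^{g_p}\frac{\zeta_{R_{pj}T}(s)}{\zeta_{R_{pj}}(s)^{2}} \;=\; \zeta_R(s)^{2}\prod_{p\mid n}\prod_{j=1}^{g_p}\frac{\zeta_{R_{pj}T}(s)}{\zeta_{R_{pj}}(s)^{2}},
\]
with each correction factor $\zeta_{R_{pj}T}(s)$ evaluated via Corollary~\ref{zetaRT} in terms of $n$, $f_{pj}$, and $e_{pj}$. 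The main conceptual step is the idempotent argument showing that $\mathbb{Z}_pT$ is already maximal precisely when $p\nmid n$; the remainder is routine bookkeeping with the Solomon and Dedekind Euler products.
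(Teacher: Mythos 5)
Your proof is correct and follows essentially the same route as the paper: Solomon's Euler product, the identification of the maximal order of $FT$ (and of each $R_{pj}T$) with two copies of the relevant ring of integers, and the observation that the only primes where $RT$ can fail to be maximal lie over divisors of $n$. The only difference is that you supply the explicit idempotent $J/n$ to justify maximality of $\mathbb{Z}_pT$ for $p \nmid n$, a fact the paper simply quotes; this is a welcome addition but not a change of method.
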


\begin{proof}
This is a consequence of the Euler product formula.  The set of rational primes $B$ where $\mathbb{Z}T$ is not a maximal order of $\mathbb{Q}T$ is the set of divisors of $n$.  This means the set of primes $\mathcal{P}$ of $F$ for which $RT$ is not a maximal order of $FT$ is contained in the set of primes of $F$ lying over a prime dividing $n$.
The square of the Dedekind zeta function occurs because the maximal order of $FT$ is the direct sum of two copies of $R$.   The square in the denominator occurs because the maximal order containing $R_{pj}T$ is a direct sum of two copies of $R_{pj}$.
\end{proof}

For our main applications we give several examples of zeta functions for locally coprime pairs of integral adjacency algebras of association schemes among those whose zeta functions were known previously.

\begin{example} {\rm
Consider the special case of the zeta function of the integral adjacency algebra of the direct product $S \times T$ of two association schemes in which the second scheme is the group $C_2$.  In this case in the notation of section 3 we have $B_2 = \{2\}$ and $\Gamma_2 = \mathbb{Z} \oplus \mathbb{Z}$. Let $(X,S)$ be any commutative scheme for which $\mathbb{Z}_2S$ is a maximal order of $\mathbb{Q}_2S$.   Using the notation of section 3, write the Wedderburn decomposition of $\mathbb{Q}_2S$ as $\oplus_{j=1}^{h_1} \oplus_{k=1}^{g_{1j}} F_{1jk}$. Then in the notation of Theorem \ref{PCoprime} we have
$$ \delta_{B_2}(s) = \prod_{j=1}^{h_1} \prod_{k=1}^{g_{1j}} (1-2^{-f_{1jk}s}+2^{-f_{1jk}(1-2s)}). $$ % \zeta_{R_{1jk}}(s)^{-2}. $$

For an easy explicit example, we will give the zeta function of $\mathbb{Z}C_6 \simeq \mathbb{Z}[C_3 \times C_2]$ (though it should be noted that Hironaka's results will give this by essentially the same method).  In this case $B_1 = \{ 3 \}$ and $B_2=\{2\}$ so $\mathbb{Z}C_3$ and $\mathbb{Z}C_2$ are locally coprime.  The Wedderburn decomposition of $\mathbb{Q}[C_3 \times C_2]$ is $\mathbb{Q} \oplus \mathbb{Q} \oplus \mathbb{Q}(\varepsilon_3) \oplus \mathbb{Q}(\varepsilon_3)$.  $\mathbb{Q}_2(\varepsilon_3)$ is unramified of degree $f=2$.  Therefore, from Theorem \ref{PCoprime} and Corollary \ref{zetaRT}, we have

$$\begin{array}{rcl}
\zeta_{\mathbb{Z}[C_3 \times C_2]}(s) = \zeta_{\mathbb{Z}}(s)^2 \zeta_{\mathbb{Z}[\varepsilon_3]}(s)^2  &\times& (1-3^{-s}+3^{(1-2s)})(1-3^{-2s}+3^{2(1-2s)}) \\
&\times& (1-2^{-s}+2^{(1-2s)})^2.
\end{array}$$

We can even say something about the $2$-adic zeta function of $S \times C_2$ when the association scheme $S$ is not commutative, as long as $\mathbb{Z}_2S$ is a maximal order of $\mathbb{Q}_2S$.  If the Wedderburn decomposition of $\mathbb{Q}_2S$ is
$$ \mathbb{Q}_2S \simeq \bigoplus_{j=1}^{h_2} \bigoplus_{k=1}^{g_{1j}} M_{r_{1jk}}(D_{1jk}),$$ and the $2$-adic divison algebras $D_{1jk}$ have index $m_{1jk}$ and center $F_{1jk}$, then the fact that the maximal order containing $\mathbb{Z}_2T$ is the direct sum of two copies of $\mathbb{Z}_2$ implies that
$$ \zeta_{\mathbb{Z}_2[S \times T]}(s) = \prod_{j=1}^{h_1} \prod_{k=1}^{g_{1j}} (1-2^{-f_{1jk}s}+2^{-f_{1jk}(1-2s)}) \zeta_{\Gamma_{1jk}}(s)^2, $$
where $\Gamma_{1jk}$ is a maximal order of the local central simple algebra $M_{r_{1jk}}(D_{1jk})$, whose zeta function can be calculated using Hey's formula.
}\end{example}

\begin{example} {\rm Next we give the zeta function of $\mathbb{Z}[S \times T]$, where $S$ and $T$ are association schemes of rank $2$ having coprime orders $m$ and $n$, respectively.   These scheme rings are the integral adjacency algebras of the ordinary complete graphs $K_m$ and $K_n$ and it is convenient to use this notation.   In this case $B_1$ is the set of primes dividing $m$, and $B_2$ is the set of primes dividing $n$, so the assumption $(m,n)=1$ implies that $\mathbb{Z}K_m$ and $\mathbb{Z}K_n$ are locally coprime.   The Wedderburn decomposition of $\mathbb{Q}[K_m \times K_n]$ consists of the direct sum of $4$ copies of $\mathbb{Q}$, so by a direct application of Theorem \ref{PCoprime}, we find

$$\begin{array}{l}
\zeta_{\mathbb{Z}[K_m \times K_n]}(s) = \zeta_{\mathbb{Z}}(s)^4 \\
\\
\qquad \times  \prod_{p | m} \bigg(\dfrac{(\sum_{r_2=0}^{[m]_p-1} p^{r_2(1-2s)} (1-p^{(-s)})^{-1}) + p^{[m]_p(1-2s)} (1 - p^{(-s)})^{-2}}{(1-p^{-s})^{-2}}\bigg)^2 \\
\\
\qquad \times  \prod_{q | n}  \bigg( \dfrac{(\sum_{r_2=0}^{[n]_q-1} q^{r_2(1-2s)} (1-q^{(-s)})^{-1}) + q^{[n]_q(1-2s)} (1 - q^{(-s)})^{-2}}{(1-q^{-s})^{-2}} \bigg)^2 \\
\\
\quad = \zeta_{\mathbb{Z}}(s)^4 \times  \prod_{p | m} \bigg((\sum_{r_2=0}^{[m]_p-1} p^{r_2(1-2s)} (1-p^{-s})) + p^{[m]_p(1-2s)} \bigg)^2 \\
\\
\qquad \times  \prod_{q | n}  \bigg( (\sum_{r_2=0}^{[n]_q-1} q^{r_2(1-2s)} (1-q^{-s})) + q^{[n]_q(1-2s)} \bigg)^2 \\
\\
.
\end{array}$$
where $p$ runs over prime divisors of $m$ and $q$ runs over prime divisors of $n$.

This formula generalizes immediately to the product of any number of rank $2$ schemes of pairwise coprime orders.
 }\end{example}

\begin{example} {\rm
Finally, we give the zeta function of $\mathbb{Z}[C_p \times K_n]$, where $\mathbb{Z}[K_n]$ is the adjacency algebra of the rank $2$ scheme of order $n$ and $p$ is a prime that does not divide $n$.  In this case the Wedderburn decomposition of $\mathbb{Q}[C_p \times K_n]$ is
$\mathbb{Q}[C_p \times K_n] \simeq \mathbb{Q} \oplus \mathbb{Q} \oplus \mathbb{Q}(\varepsilon_p) \oplus \mathbb{Q}(\varepsilon_p)$.   Note that, in our notation we will have $h_1=h_2=2$.  Suppose that for each prime divisor $q$ of $n$, $\mathbb{Q}_q(\varepsilon_p)$ has residue degree $f_q$.  Then there are $g_q=(p-1)/f_q$ primes of $\mathbb{Z}_q[\varepsilon_p]$ lying over $q$.  By Theorem \ref{PCoprime}, we have
$$\begin{array}{l}
\zeta_{\mathbb{Z}[C_p \times K_n] }(s) =  \zeta_{\mathbb{Z}}(s)^2  \zeta_{\mathbb{Z}[\varepsilon_p]}(s)^2  (1 - p^{-s} + p^{1-2s})^2 \prod_{q|n} \prod_{j=1}^{h_1} \prod_{k=1}^{g_1j} \dfrac{\zeta_{R_{1jk}K_n}(s)}{\zeta_{R_{1jk}}(s)^2} \\
 \\
= \zeta_{\mathbb{Z}}(s)^2  \zeta_{\mathbb{Z}[\varepsilon_p]}(s)^2  (1 - p^{-s} + p^{1-2s})^2 \\
\\
\times  \prod_{q|n}  \bigg(\dfrac{(\sum_{r_2=0}^{[n]_q-1} q^{r_2(1-2s)}) (1-q^{-s})^{-1} + q^{[n]_q(1-2s)} (1 - q^{-s})^{-2}}{(1-q^{-s})^{-2}} \\
\\
\times  \prod_{k=1}^{(p-1)/f_q}  \dfrac{(\sum_{r_2=0}^{[n]_q-1} q^{f_qr_2(1-2s)}) (1-q^{-f_qs})^{-1} + q^{f_q[n]_q(1-2s)} (1 - q^{-f_q-s})^{-2})}{(1-q^{-f_qs})^{-2}} \bigg) \\
\\
 = \zeta_{\mathbb{Z}}(s)^2  \zeta_{\mathbb{Z}[\varepsilon_p]}(s)^2  (1 - p^{-s} + p^{1-2s})^2 \\
\\
\quad \times  \prod_{q|n}  \bigg(\big((\sum_{r_2=0}^{[n]_q-1} q^{r_2(1-2s)}) (1-q^{-s}) + q^{[n]_q(1-2s)} \big)\\
\\
\qquad \times \big( \prod_{k=1}^{(p-1)/f_q} (\sum_{r_2=0}^{[n]_q-1} q^{f_qr_2(1-2s)}) (1-q^{-f_qs}) + q^{f_q[n]_q(1-2s)} \big) \bigg).
\end{array}$$
Again it should be straightforward to generalize this formula to give the zeta function of $\mathbb{Z}[C_m \times T]$, where $T$ is a rank $2$ scheme of order $n$ and $m$ is a square-free positive integer that is relatively prime to $n$.
}\end{example}

Zeta functions have been described for $C_p \times C_p$ for a prime $p$ \cite{Tak92}, $C_{p^2}$ for a prime $p$ \cite{Reiner} and nonabelian metacyclic groups of type $C_p \rtimes C_q$ for a prime $p$ and a prime $q$ dividing $p-1$ \cite{Hiron81}.  The formulas for these are a bit more complicated, but in all of these cases the sets of ``bad'' primes are precisely the primes dividing the order of the group.  The direct product of any of these groups with a cyclic group of coprime square-free coprime order or a rank $2$ scheme of coprime order can be handled using the present methods.  For example, the zeta functions of the integral scheme rings of $(C_2 \times C_2) \times K_3$, $C_4 \times C_3$, or $S_3 \times K_5$ can be established from their results using an application of Theorem \ref{PCoprime}.

\medskip
The explicit formulas for the zeta function of $C_2 \times C_2$ and $C_3 \times C_3$ presented by Takagehara \cite{Tak92} indicate that it will be considerably more difficult to calculate the zeta function of the tensor product of two integral adjacency algebras that are not locally coprime.

\end{document}